\newtheorem{thm}{Theorem}[section]
\newtheorem*{mainthm}{Main Theorem}
\newtheorem{propn}[thm]{Proposition}
\newtheorem{lem}[thm]{Lemma}
\newtheorem{cor}[thm]{Corollary}
\theoremstyle{definition}
\newtheorem{defn}[thm]{Definition}
\newtheorem{eg}[thm]{Example}
\newtheorem{q}[thm]{Question}
\theoremstyle{remark}
\newtheorem{rmk}[thm]{Remark}
\newcommand{\Z}{\mathbf{Z}}
\newcommand{\Q}{\mathbf{Q}}
\newcommand{\C}{\mathbf{C}}
\renewcommand{\P}{\mathbf{P}}
\newcommand{\F}{\mathbf{F}}
\renewcommand{\H}{\mathbf{H}}
\newcommand{\Orb}{\mathcal{O}}
\let\le\leqslant
\let\ge\geqslant
\let\bar\overline
\DeclareMathOperator{\codim}{codim}
\DeclareMathOperator{\PrePer}{PrePer}
\DeclareMathOperator{\Per}{Per}
\DeclareMathOperator{\Res}{Res}
\DeclareMathOperator{\End}{End}
\DeclareMathOperator{\Irr}{Irr}
\DeclareMathOperator{\Fix}{Fix}
\DeclareMathOperator{\lcm}{lcm}
\DeclareMathOperator{\PGL}{PGL}
\newcommand\grad\nabla
\newcommand\into\hookrightarrow
\begin{document}

\title[Dynamically improper hypersurfaces]{Dynamically improper hypersurfaces for endomorphisms of projective space}

\author{Matt Olechnowicz}
\address{Department of Mathematics, University of Toronto}
\email{mateusz@math.toronto.edu}
\date{\today}

\begin{abstract}
We introduce a new 
generalization of the notion of preperiodic hypersurface 
and explore some of its basic ramifications.
We also prove that among nonlinear 
endomorphisms of projective space,
those with a periodic critical point are Zariski dense.
This answers a question of Ingram.
\end{abstract}

\maketitle

\section{Introduction}

Let $f$ be a degree-$d$ endomorphism of $\P^n$ defined over an algebraically closed field of characteristic 0.
A subvariety $X$ of $\P^n$ is called \emph{preperiodic} if for every irreducible component $Z$ of $X$ there exist 
distinct non-negative integers $s$ and $t$ such that $f^s(Z) = f^t(Z)$. 
For $d \ge 2$, 
the set $\PrePer(f)$ of 
            irreducible 
zero-dimensional preperiodic subvarieties (i.e.,~preperiodic points) is Zariski dense in 
    every preperiodic subvariety of
$\P^n$,
though if $f$ is 
very
general, then $f$ has no positive-dimensional preperiodic subvarieties other than $\P^n$ itself
(see Fakhruddin \cite[Theorem 5.1]{Fakhruddin2003} and \cite[Theorem 1.2]{Fakhruddin2014}).

Attached to every nonlinear endomorphism $f$ is a canonical hypersurface $C_f$ called its \emph{critical locus}, which is cut out by the Jacobian determinant of any 
            affine 
lift of $f$.
If $C_f$ is preperiodic, then $f$ is said to be \emph{post-critically finite}.
When $n = 1$, post-critically finite maps are relatively common, in the sense that they form a Zariski dense subset of the parameter space ${\End^n_d}$ (see DeMarco \cite[Theorem 1.6]{DeMarco}).
On the contrary, the analogous claim for $n \ge 2$ was 
recently
announced to be false by Gauthier, Taflin, and Vigny \cite[Theorem B]{GVT}.

Thus, the general nonexistence of nontrivial preperiodic subvarieties along with the 
sparseness of post-critically finite maps 
suggests 
that preperiodicity
may be too restrictive a notion in dimension 
greater 
than 1.
To remedy these deficiencies, we propose an alternative generalization of preperiodicity, from points in $\P^1$ to hypersurfaces in $\P^n$.

Consider for the moment an irreducible hypersurface $H$ in $\P^n$.
Its iterated images $H$, $f(H)$, $f^2(H)$, $\ldots$ are again irreducible hypersurfaces in $\P^n$ (assuming $d \ge 1$).
By dimension counting, the intersection of any $n+1$ of them ought to be empty (e.g.,~3 curves in $\P^2$ are rarely concurrent); 
if ever it isn't, we declare $H$ to be \emph{improper}.
We then call a reducible hypersurface \emph{improper} if each of its irreducible components is improper.

We encourage the reader to pause here and verify that
an improper hypersurface in $\P^1$ is just a finite subset of $\PrePer(f)$.
This observation justifies improperness as a 
                    natural 
generalization of preperiodicity. 
But unlike preperiodicity, 
improperness in $\P^n$ does not 
persist upon replacing $f$ by $f^k$ 
(at least in degree 1, cf.~Example \ref{eg:I_not_DI}), 
which prompts us to define a hypersurface to be \emph{dynamically improper} if it is improper under every iterate of $f$ (Definition \ref{defn:improperness}).

Dynamically improper hypersurfaces have many favourable properties in common with preperiodic points;
for instance, they are Zariski dense in their natural parameter space (Corollary \ref{thm:1}).
This abundance, in turn, suggests that the critical locus of an endomorphism stands a good chance at being dynamically improper. 
Indeed, endomorphisms with dynamically improper critical locus (termed \emph{post-critically dynamically improper}) are a direct generalization of the post-critically finite maps of dimension 1 (where dynamical improperness is equivalent to preperiodicity), so by analogy one might expect them to be relatively common.

Our main result confirms this expectation, and answers a question posed by Ingram at the ``Moduli spaces for algebraic dynamical systems'' AIM Workshop in September 2021.

\begin{mainthm}
Let $n \ge 1$ and $d \ge 2$. 
Write $\End^n_d$ for the parameter space of degree-$d$ endomorphisms of $\P^n$.
Then the set of maps with a periodic critical point is Zariski dense in $\End^n_d$.
Thus,
the same holds for the set of post-critically dynamically improper maps (at least assuming $d \ge 3$ when $n \ge 4$).
\end{mainthm}

We note that it is possible to use a result of Berteloot--Bianchi--Dupont \cite[Theorem 1.6]{BBD}, obtained by analytic methods, to deduce the complementary statement that the set of maps with a strictly preperiodic critical point is Zariski dense in $\End^n_d$, at least over $\C$.  We were not aware of this avenue when our research was carried out, and indeed our approach is purely algebraic.

To prove the Main Theorem,
we exhibit infinitely many pairwise distinct hypersurfaces' worth of maps with the desired property.
The hypersurfaces are constructed using elimination theory, 
and their infinitude is established using symmetric powers.

A key technique---indeed, our \emph{only} technique---for proving that a given hypersurface is dynamically improper
is to show that every irreducible component contains a preperiodic subvariety (Proposition \ref{propn:coperiodic}). 
This raises a 
    tantalizing
new ``unlikely intersections'' problem, which we believe should have a positive answer: 

\begin{q} \label{q:non_coperiodic}
Does every 
dynamically improper hypersurface contain a preperiodic point?
\end{q} 

It would also be interesting to study dynamical improperness from an arithmetic perspective.
Inspired by \cite[Theorem 1]{Ingram2}, which says that when $n = 1$ the critical height is a moduli height away from the Latt\`es locus, we ask:

\begin{q}
Is the locus of post-critically dynamically improper maps contained in a set of bounded height plus a proper closed subset of the moduli space $\mathsf{M}^n_d$ of degree-$d$ endomorphisms of $\P^n$?
\end{q}

The author would like to thank Patrick Ingram, Joe Silverman, Jason Bell, Rob Benedetto, Laura DeMarco, and Nicole Looper, as well as the anonymous referees, for their helpful comments on earlier versions of this paper.

\section{Preperiodicity}

The definition of preperiodicity in higher dimensions has some subtleties, which we now address.
(See \cite[\S 2.2.3]{DMLbook} 
for the case of quasiprojective varieties.)
Here, $\Irr(X)$ refers to
the set of irreducible components of the topological space $X$.

\begin{propn}
Let $f : \P^n \to \P^n$ be a morphism and let $X \subseteq \P^n$ be closed.
Each of the following implies the next:
\begin{enumerate}[(a)]
    \item For each $Z \in \Irr(X)$ there exist $s \ne t$ such that $f^s(Z) = f^t(Z)$.
    \item $f^s(X) = f^t(X)$ for some $s \ne t$.
    \item $f^s(X) \subseteq f^t(X)$ for some $s > t$.
    \item $\Orb_f(X) := \bigcup_{i \ge 0} f^i(X)$ is closed.
\end{enumerate}
Moreover, (c) $\Rightarrow$ (b). 
If $X$ is pure, then (b) $\Rightarrow$ (a) and (at least if the ground field is uncountable) (d) $\Rightarrow$ (b).
\end{propn}
\begin{proof}
(a) $\Rightarrow$ (b): 
Let $\Irr(X) = \{Z_1, \ldots, Z_r\}$. Pick $t_i \ge 0$ and $p_i \ge 1$ such that $f^{t_i+p_i}(Z_i) = f^{t_i}(Z_i)$ for all $i$. 
Set $t = \max t_i$ and $p = \lcm p_i$. 
Then $f^{t+p}(X) = f^t(X)$.
(b) $\Rightarrow$ (c): Trivial.
(c) $\Rightarrow$ (b): 
    $f^t(X) \supseteq f^{s-t}(f^t(X)) \supseteq f^{2(s-t)}(f^t(X)) \supseteq \ldots$ must stabilize.
(c) $\Rightarrow$ (d): $\Orb_f(X) = X \cup f(X) \cup \ldots \cup f^{s-1}(X)$ is a finite union of closed sets.
(b) $\Rightarrow$ (a): 
Without loss of generality, $f$ is nonconstant and $f(X) = X$.  
Then $f(\Irr(X)) \subseteq \Irr(X)$, 
for if $Z \in \Irr(X)$ then $f(Z)$ is an irreducible subset of $X$, hence contained in an irreducible component $W$ thereof; but $\dim f(Z) = \dim W$ by purity, so $f(Z) = W$. 
Since $\Irr(X)$ is finite, every $Z$ is preperiodic.
(d) $\Rightarrow$ (b): 
If $f^s(X) \ne f^t(X)$ for all $s \ne t$, 
then 
$\dim \overline{\Orb_f(X)} > \dim X$ (cf.~Remark \ref{rmk:stress}).
By the Baire category theorem \cite[p.~76, Exercise 5.10]{QingLiu},
$\Orb_f(X) \ne \overline{\Orb_f(X)}$.
\end{proof}

\begin{rmk}
Purity is necessary for (b) $\Rightarrow$ (a).
To wit, 
take $X = C \cup \{P\}$ where $P \not \in C \cup \PrePer(f)$, $f(P) \in C$, and $f(C) = C$
(e.g.,~$f(x : y : z) = (x^2 : y^2 : z^2)$, 
	$C = V(y^2 - xz)$, 
	and $P = (1 : t : -t^2)$ for $t$ not a root of unity).
Then $f(X) = f^2(X)$ 
yet $\{P\} \in \Irr(X)$ is not preperiodic.
\end{rmk}

\begin{propn}
Let $Z \subseteq \P^n$ be closed and irreducible.
Then
\[
	\overline{\Orb_f(Z)} = Z \cup f(Z) \cup \ldots \cup f^{t-1}(Z) \cup Z^* \cup f(Z^*) \cup \ldots \cup f^{p-1}(Z^*)
\]
for some integers $t \ge 0$ and $p \ge 1$ and some \emph{periodic} closed irreducible $Z^* \subseteq \P^n$ 
with 
    the property that 
$f^i(Z) \subseteq \Orb_f(Z^*)$ if and only if $i \ge t$.
Moreover, $\dim Z^* \ge \dim Z$ with equality 
    if and only if 
$Z$ is preperiodic with tail $t$ and period $p$.
\end{propn}

\begin{rmk}
$\Orb_f(Z^*)$ coincides with the $\omega$-limit set $\omega(Z, f) := \displaystyle \lim_{i \to \infty} \overline{\Orb_f(f^i(Z))}$. 
\end{rmk}

\begin{proof}
Let $Y = \overline{\Orb_f(Z)}$. 
By continuity, $f(Y) \subseteq Y$.
Thus, 
the top-dimensional components of $Y$ are preperiodic: 
there are finitely many of them, 
and they form an $f$-invariant subset of $\Irr(Y)$. 
Now, every component of $Y$ 
contains \emph{some} iterate of $Z$, 
lest it be redundant.
So, there exists a periodic top-dimensional component $Z^*$ which subsumes an iterate of $Z$ \emph{no later than} any other periodic top-dimensional component
    $Z'$ does; i.e.,~such that if $Z' \supseteq f^i(Z)$ then $Z^* \supseteq f^j(Z)$ for some $j \not> i$.
Let $p$ be 
    the period of $Z^*$ 
and choose $t$ minimal subject to $f^t(Z) \subseteq Z^*$.
Since $f^i(Z) \subseteq Y$ for all $i$, 
and $\{Z^*, f(Z^*), f^2(Z^*), \ldots\} \subseteq \Irr(Y)$, 
we have 
\[
	Z \cup f(Z) \cup \ldots \cup f^{t-1}(Z) \cup \Orb_f(Z^*) \subseteq Y.
\]
On the other hand, $f$-invariance implies $f^i(Z) \subseteq \Orb_f(Z^*)$ for all $i \ge t$, so
\[
	\Orb_f(Z) \subseteq Z \cup f(Z) \cup \ldots \cup f^{t-1}(Z) \cup \Orb_f(Z^*).
\]
Taking closures
gives the desired equality.
Next, suppose $f^i(Z) \subseteq \Orb_f(Z^*)$
for some $i$. 
By irreducibility, 
$f^i(Z) \subseteq f^j(Z^*)$ for some $j$. 
By choice of $Z^*$, $i \ge t$.
Lastly, $\dim Z \le \dim Z^*$ because $f^t(Z) \subseteq Z^*$. 
If $\dim Z = \dim Z^*$ then $f^t(Z) = Z^*$ is periodic---so $Z$ is preperiodic with tail $t$ and period $p$.
The converse is clear.
\end{proof}

\section{Dynamical Improperness} \label{sec:def} \label{sec:players}

\begin{defn} \label{defn:improperness}
Let $f$ be a 
nonconstant 
endomorphism of $\P^n$.
A hypersurface $H$ of $\P^n$ is \emph{improper} under $f$, 
or \emph{$f$-improper}, 
if for every irreducible component $Z$ of $H$ there exist $0 \le i_0 < \ldots < i_n$ such that $f^{i_0}(Z) \cap \ldots \cap f^{i_n}(Z) \ne \varnothing$.
We say $H$ is \emph{dynamically improper} if $H$ is $f^r$-improper 
for each $r > 0$.
\end{defn}

As alluded to in \S 1, the term ``improper'' refers to the fact that $n+1$ irreducible codimension-1 subvarieties of $\P^n$ intersect properly if and only if they have no point in common, while the term ``dynamical'' refers to 
invariance upon replacing $f$ by an iterate (cf.~Remark \ref{rmk:dynamicalization}).

One can show that $H$, $f(H)$, and $f^{-1}(H)$ are all improper or dynamically improper whenever any one of them is, and that both notions behave as expected under conjugation.
These claims may be verified through repeated judicious application of the following Proposition.

\begin{propn} \label{propn:basic}
Let $f, \varphi, g$ be three nonconstant endomorphisms of $\P^n$ with $\varphi f = g \varphi$ and let $H \subset \P^n$ be an $f$-improper hypersurface. 
Then $\varphi(H)$ is $g$-improper.
\end{propn}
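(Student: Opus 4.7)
The plan is to reduce everything to pushing forward the witnessing $(n+1)$-fold intersection across $\varphi$.

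First I would record two preliminaries. A nonconstant endomorphism of $\P^n$ is automatically a finite surjective morphism, so $\varphi$ preserves dimensions of subvarieties; in particular $\varphi(H)$ is again a hypersurface. Also, the semi-conjugacy $\varphi f = g \varphi$ iterates to $\varphi \circ f^j = g^j \circ \varphi$ for every $j \ge 0$ by a one-line induction.

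Next, I would unwind irreducible components. Write $H = \bigcup_\alpha Z_\alpha$ as its decomposition into irreducible components. Each image $\varphi(Z_\alpha)$ is an irreducible hypersurface (closed by finiteness of $\varphi$, irreducible as a continuous image of an irreducible set, and of the correct dimension by the first preliminary). Since two irreducible hypersurfaces related by inclusion must coincide, the irreducible components of $\varphi(H) = \bigcup_\alpha \varphi(Z_\alpha)$ are exactly the distinct $\varphi(Z_\alpha)$. In particular, every irreducible component $W$ of $\varphi(H)$ is of the form $\varphi(Z)$ for some irreducible component $Z$ of $H$.

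Finally, I would push the improperness across $\varphi$. Since $Z$ is a component of the $f$-improper hypersurface $H$, there exist $0 \le i_0 < \cdots < i_n$ with $\bigcap_{k=0}^n f^{i_k}(Z) \ne \varnothing$. Applying $\varphi$, together with the set-theoretic inclusion $\varphi(A \cap B) \subseteq \varphi(A) \cap \varphi(B)$ and the iterated semi-conjugacy, yields
\[
\varnothing \ne \varphi\Bigl(\bigcap_{k=0}^{n} f^{i_k}(Z)\Bigr) \subseteq \bigcap_{k=0}^{n} \varphi(f^{i_k}(Z)) = \bigcap_{k=0}^{n} g^{i_k}(\varphi(Z)) = \bigcap_{k=0}^{n} g^{i_k}(W),
\]
showing $W$ is $g$-improper. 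Since $W$ was an arbitrary component, $\varphi(H)$ is $g$-improper.

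There is no substantive obstacle: the entire argument glues together the iterated semi-conjugacy with the elementary fact $\varphi(A \cap B) \subseteq \varphi(A) \cap \varphi(B)$ via a line of bookkeeping on irreducible components. This is presumably why the paper declares the proof straightforward and omits it.
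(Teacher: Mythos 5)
Your proof is correct, and since the paper explicitly omits the argument as ``straightforward,'' it is presumably the argument the author intended: iterate the semi-conjugacy to $\varphi \circ f^j = g^j \circ \varphi$, observe that $\varphi$ (being finite and surjective) carries irreducible components of $H$ onto the irreducible components of $\varphi(H)$, and push the nonempty witnessing intersection forward via $\varphi(A \cap B) \subseteq \varphi(A) \cap \varphi(B)$. The component bookkeeping and the dimension-preservation step are both handled cleanly, and no gaps remain.
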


\begin{proof}
Let $Z$ be an irreducible component of $\varphi(H)$. Then there exists an irreducible component $W$ of $H$ such that $\varphi(W) = Z$. Since $H$ is improper under $f$, there exist $i_0 < \ldots < i_n$ such that $f^{i_0}(W) \cap \ldots \cap f^{i_n}(W) \ne \varnothing$. Applying $\varphi$ shows that $g^{i_0}(Z) \cap \ldots \cap g^{i_n}(Z) \ne \varnothing$.
\end{proof}

The next result is our sole source of dynamically improper hypersurfaces.

\begin{propn} \label{propn:coperiodic}
If every irreducible component of $H$ contains a preperiodic subvariety, 
then $H$ is dynamically improper.
\end{propn}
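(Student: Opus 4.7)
The plan is to unwind the definitions and produce, for each $r>0$ and each irreducible component $Z$ of $H$, an explicit arithmetic progression of indices $i_0<i_1<\cdots<i_n$ witnessing that $Z$ is $f^r$-improper. Fix such an $r$ and $Z$, and let $W\subset Z$ be a preperiodic subvariety as hypothesized. By definition of preperiodicity, there exist integers $s\ge 0$ and $p\ge 1$ with $f^s(W)=f^{s+p}(W)$; a standard induction then shows that for all $j,k\ge s$ one has $f^j(W)=f^k(W)$ whenever $j\equiv k\pmod p$.

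The goal now is to find $0\le i_0<i_1<\cdots<i_n$ such that all the integers $r i_0, r i_1,\ldots, r i_n$ are $\ge s$ and pairwise congruent modulo $p$. I would simply set $i_0:=\lceil s/r\rceil$ and $i_k:=i_0+kp$ for $k=0,1,\ldots,n$. These are $n+1$ strictly increasing nonnegative integers, and $r i_k = r i_0 + krp\equiv r i_0\pmod p$ while $r i_k\ge r i_0\ge s$. By the observation above, $f^{r i_k}(W)=f^{r i_0}(W)$ for every $k$. Since $W$ is nonempty and $W\subset Z$, this yields
\[
\varnothing \ne f^{r i_0}(W) \;\subset\; \bigcap_{k=0}^{n} f^{r i_k}(W) \;\subset\; \bigcap_{k=0}^{n} f^{r i_k}(Z).
\]
Thus $Z$ is $f^r$-improper. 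Since $Z$ was an arbitrary component and $r>0$ was arbitrary, $H$ is dynamically improper.

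The argument is essentially mechanical once the correct progression $i_k=i_0+kp$ is chosen; there is no real obstacle. The only mild subtlety worth flagging is the need for the base index $i_0$ to be large enough that $r i_0\ge s$, ensuring that the orbit of $W$ has entered its periodic cycle before sampling begins—without this, the equality $f^{r i_k}(W)=f^{r i_0}(W)$ could fail for $k=0$ in the purely preperiodic (non-periodic) case.
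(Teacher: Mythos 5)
Your proof is correct and follows essentially the same approach as the paper's: both fix a component $Z$, pass to a preperiodic subvariety inside it, and exhibit an arithmetic progression of indices $i_0, i_0+p, \ldots, i_0+np$ along which the images of the subvariety coincide, forcing the required nonempty intersection. The only cosmetic difference is that the paper works with the tail and period of the subvariety directly under $f^r$ (so no scaling by $r$ or ceiling is needed), whereas you work with the tail and period under $f$ and then verify by hand that multiplying the progression by $r$ still lands in the periodic part; both routes are sound and essentially equivalent.
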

\begin{proof}
Dynamical improperness is preserved under finite unions,
so we may assume $H$ is irreducible.
Replacing $H$ by an iterate, 
we may assume $H$ contains a periodic subvariety.
Replacing $f$ by an iterate, 
we may assume $H$ contains a fixed subvariety.
Now the claim is trivial.
\end{proof}

An immediate application is that dynamically improper hypersurfaces are dense in every degree. 
Note that our proof does not make use of the density of $\PrePer(f)$, only its infinitude (an elementary consequence of B\'ezout's theorem).

\begin{cor} \label{thm:1}
Let $m \ge 1$ and let $f$ be a nonlinear endomorphism of $\P^n$.
Write $\F^n_m$ for the projective space of degree-$m$ homogeneous forms in $n+1$ variables. 
Then the set 
\[
    \{\Phi \in \F^n_m : V(\Phi) \text{ is dynamically improper under } f\}
\]
is Zariski dense in $\F^n_m$.
\end{cor}

Before we prove Corollary \ref{thm:1}, we make some useful definitions and observations.
By stars-and-bars, 
\begin{equation}\label{eq:starsbars}
    \dim \F^n_m = \binom{n+m}{m} - 1.
\end{equation}
Put 
\[\H^n_m = \{\Phi \in \F^n_m : \Phi \text{ is irreducible}\}.\]
As the complement of the union of finitely many images of multiplication maps, $\H^n_m$ is open. 
Note that $\H^n_1 = \F^n_1$ for all $n$, while $\H^n_m = \varnothing$ if and only if $n = 1$ and $m > 1$.
For each point $P$ of $\P^n$ let
\[H_P = \{\Phi \in \F^n_m : \Phi(P) = 0\}\]
be the set of degree-$m$ hypersurfaces passing through $P$. 
Observe that $H_P$ is in fact a hyperplane: 
it is the projectivization of the kernel of the evaluation functional.
Finally, let $\eta : (\P^1)^m \to \F^1_m$ be the so-called Vieta map, the morphism sending each $m$-tuple of points $P_i = (a_i : b_i)$ to (the equivalence class of) the product 
\[\eta(P_1, \ldots, P_m) = (b_1 x - a_1 y) \ldots (b_m x - a_m y).\]

\begin{proof}[Proof of Corollary \ref{thm:1}]
Let $S$ be the set in question,
and consider $n = 1$. 
By induction on $m$, 
the set $D = \PrePer(f)^m$ is dense in the multiprojective space $(\P^1)^m$.
Since the Vieta map 
is continuous and surjective, $\eta(D)$ is dense in $\F^1_m$.
But $\eta(D) \subseteq S$ 
by Proposition \ref{propn:coperiodic}.

Next consider $n > 1$.
Since $\PrePer(f)$ is infinite and the $H_P$'s are pairwise distinct with codimension 1, the set
\[D = \bigcup_{\mathclap{P \in \PrePer(f)}} \ H_P\]
is dense in $\F^n_m$.
By Proposition \ref{propn:coperiodic}, $D \cap \H^n_m \subseteq S$.
As $\F^n_m$ is irreducible and $\H^n_m$ is non-empty open, $D \cap \H^n_m$---and therefore $S$---is dense in $\F^n_m$.
\end{proof}

\begin{rmk}
In the absence of preperiodic points, improperness admits the following characterization: 
an irreducible hypersurface $H$ disjoint from $\PrePer(f)$ is improper if and only if 
$H$ contains a \emph{frame}: $n+1$ points $P_0, \ldots, P_n$ with 
overlapping orbits such that $f^s(P_k) = f^s(P_l)$ implies $k = l$. 
Indeed, if $P \in f^{i_0}(H) \cap \ldots \cap f^{i_n}(H)$ for some $i_0 < \ldots < i_n$ then there exist points $P_k$ in $H$ such that $f^{i_k}(P_k) = P$ for all $k$.
Evidently $\Orb_f(P_0) \cap \ldots \cap \Orb_f(P_n) \ne \varnothing$, and if $f^s(P_k) = f^s(P_l)$ 
then $f^{s+i_l}(P_k) = f^{s+i_l}(P_l) = f^s(P) = f^{s+i_k}(P_k)$.
But $P_k \not \in \PrePer(f)$ so $s+i_l = s+i_k$, whence $k = l$. 
The converse is clear: pick any $P$ in every $\Orb_f(P_i)$.
\end{rmk}

\begin{rmk} \label{rmk:dynamicalization}
It is convenient to formalize the notion of a dynamical property like so.
For each subset $P$ of a multiplicative semigroup $E$, define
\begin{align*}
	\sqrt{P} &= \{f \in E : f^s \in P \text{ for some } s > 0\}, \\
	P^\circ &= \{f \in E : f^r \in P \text{ for all } r > 0\}. 
\end{align*}
The exponent laws imply these operations are monotone and idempotent.
Clearly, \[\sqrt{P} \supseteq P \supseteq P^\circ.\]
If the first containment is an equality we say $P$ is \emph{radical}; 
if both containments are equalities we say $P$ is \emph{dynamical}.
For instance, ``$X$ is preperiodic'' is a dynamical property, 
while ``$(P_0, \ldots, P_n)$ is a frame'' is not (as $\bigcap_i \Orb_{f^r}(P_i) = \varnothing$ for all large $r$).

Reversing quantifiers shows $\sqrt{P^\circ} \subseteq \sqrt{P}^\circ$ for all $P$. Therefore, if $\sqrt{P} = P$ then
\[\sqrt{P^\circ} = P^\circ = (P^\circ)^\circ.\]
In other words, any radical property $P$ can be upgraded to a dynamical property by declaring $f$ to be ``dynamically $P$'' if every iterate of $f$ is $P$.  

Since improperness is radical (trivially), dynamical improperness is dynamical.
Likewise, post-critical improperness is radical 
(as $C_{f^r} = \bigcup_{i<r} f^{-i}(C_f) \supseteq C_f$ by the chain rule) 
so ``dynamical'' post-critical improperness is dynamical; 
and \emph{this} property---namely, that every iterate of $f$ be post-critically improper---is equivalent to $f$ being post-critically dynamically improper (also by the chain rule).
\end{rmk}

\section{Examples} \label{sec:examples}

First, we exhibit an improper hypersurface that is not dynamically improper.

\begin{eg} \label{eg:I_not_DI}
Let $f = (x : y/2 : -{z/3})$. 
Then $L = V(x + y + z)$ is improper under $f$ but not under $f^2$.
Indeed, $L$ contains $f^i(5 : -32 : 27)$ for $i = 0, 2, 3$, so $L$ is $f$-improper. 
On the other hand, as $f$ is invertible and $f^i(L) = V(x + 2^i y + (-3)^i z)$ for all $i$, 
$L$ is $f^2$-improper if and only if there exist integers $0 < i < j$ such that
the linear system 
\[
x + y + z = x + 2^{2i} y + (-3)^{2i} z = x + 2^{2j} y + (-3)^{2j} z = 0
\]
has a nontrivial solution.
If we let $a = 4^i$ and $b = 9^i$, and put
\[q(t) = \det \begin{pmatrix} 
1 & 1 & 1 \\ 
1 & a & b \\
1 & a^t & b^t \end{pmatrix}
= (a - 1) b^t - (b - 1) a^t + b - a \qquad (t\in\mathbf{R})\]
then it's easy to see that $q$ is differentiable with just one critical point, hence by Rolle's theorem 
has at most two roots. 
Since $q(0) = q(1) = 0$ we conclude that $q(t) \ne 0$ for all other $t$, in particular for $t = j/i > 1$. 
Thus, $L$ is not $f^2$-improper.
\end{eg}

By interpolating a sufficiently
    sparse 
subset of the orbit of any wandering point $P$, one can find a hypersurface $H$ that is improper under any given number of iterates of $f$, yet is not obviously dynamically improper.
(Explicitly, if $H$ passes through $f^{si}(P)$ for $i = 0, \ldots, n$ then $H$ is $f^r$-improper for all $r \mid s$.)
It seems difficult to prove that any particular $H$ is not dynamically improper under a given $f$;
indeed, we were not able to produce any satisfactory examples with $\deg f > 1$.

\begin{q}
Does every $f$ admit an improper hypersurface that is not dynamically improper?
\end{q}

Next,
imitating the construction of dynatomic polynomials in dimension 1,
we describe how to 
find all 
improper hypersurfaces of any degree, for any map, in any dimension, 
of any index.
Fix $n, d, m \ge 1$.
Let $f \in \End^n_d$ and $\Phi \in \F^n_m$. 
For each $i \ge 0$ let $f^i_*\Phi$ denote the defining form of $f^i(V(\Phi))$,
and for each index $\underline{i} = (i_0, \ldots, i_n)$ let $P_{m,\underline{i},f}$
be the Macaulay resultant of $f^{i_0}_*\Phi$, \ldots, $f^{i_n}_*\Phi$ (see \cite[Chapter 3]{Cox} for relevant properties of resultants).
Then $P_{m,\underline{i},f}$ is a homogeneous polynomial in the coefficients of $\Phi$
which vanishes if and only if 
\[f^{i_0}(V(\Phi)) \cap \ldots \cap f^{i_n}(V(\Phi)) \ne \varnothing.\]
In particular, if $\Phi$ is irreducible and $P_{m,\underline{i},f}(\Phi) = 0$ for some strictly increasing $\underline{i}$, then $V(\Phi)$ is improper under $f$. 

Generically, 
\[\deg P_{m,\underline{i},f} 
= m^n d^{(n-1)(i_0 + \ldots + i_n)}
  \big(d^{i_0} + \ldots + d^{i_n}\big)\]
for all $0 \le i_0 < \ldots < i_n$, 
by homogeneity of the resultant and the fact that $f^i_*\Phi$ has degree $m d^{i(n-1)}$ in the variables and degree $d^{in}$ in the coefficients.
When $n = m = 1$, the $P_{m,\underline{i},f}$'s are essentially the ``(generalized) period polynomials'' of \cite[Exercise 4.10]{SilvermanADS}.
The divisibility relations between various $P_{m,\underline{i},f}$'s are mysterious.

We now illustrate the above procedure in the simplest nontrivial case.

\begin{eg} \label{eg:squaring}
Let $f = (x^2 : y^2 : z^2)$ and let $\Phi = ax + by + cz$ be a generic linear form. 
Using Sage and Macaulay2, we find that
\begin{align*}
    P_{1,(0,1,2),f}(a, b, c) 
    &= \Res(\Phi, f_*\Phi, f^2_*\Phi) \\
    &= a^8 b^8 c^8 (a+b)^2 (b+c)^2 (a+c)^2 (a+b+c)^2 \\
    &\phantom{==} (a^2 b + a b^2 + a^2 c + a c^2 + b^2 c + b c^2 - 6 abc) \\
    &\phantom{===} \Psi(a, b, c) \Psi(b, c, a) \Psi(c, a, b)
\end{align*}
where 
\begin{align*}
    f_*\Phi &= a^4 x^2 + b^4 y^2 + c^4 z^2 - 2(a^2 b^2 xy + a^2 c^2 xz + b^2 c^2 yz) \\ 
    f^2_*\Phi &= a^{16} x^4 + b^{16} y^4 + c^{16} z^4 \\
    &{} - 4(a^{12} b^4 x^3y + a^{12} c^4 x^3z + b^{12} a^4 y^3x + b^{12} c^4 y^3z + c^{12} a^4 z^3x + c^{12} b^4 z^3y) \\ 
    &{} + 6(a^8 b^8 x^2y^2 + b^8 c^8 y^2 z^2 + c^8 a^8 z^2x^2) \\
    &{} - 124(a^8 b^4 c^4 x^2yz + a^4 b^8 c^4 xy^2z + a^4 b^4 c^8 xyz^2)
\shortintertext{and}
    \Psi &= ac^6 + bc^6
+ a^2 b^5 + a^5 b^2  
+ a^5 c^2 + b^5 c^2 \\
&{} + 2(a^5 b c + a b^5 c 
+ a^4 b^2 c + a^2 b^4 c) 
+ 3(a^4 b^3 + a^3 b^4) \\
&{} - 4(a^4 c^3 + b^4 c^3 
+ a^2 b^2 c^3 
+ a^2 b c^4 + a b^2 c^4 
+ a^2 c^5 + b^2 c^5) \\
&{} - 5(a^4 b c^2 + a b^4 c^2)
+ 6(a^3 c^4 + b^3 c^4 + a^3 b c^3 + a b^3 c^3) \\
&{} + 13(a^3 b^2 c^2 + a^2 b^3 c^2).
\end{align*}

The linear factors of $P_{1,(0,1,2),f}$ parametrize lines containing certain fixed points of $f$, while each septic factor has genus 1 and is singular at 5 points.
The cubic factor is rational (i.e.,~birational to $\P^1$)
with parametrization 
\begin{align*}
    a(s, t) &= t (t-s) (s-2t) \\            
    b(s, t) &= a(t, s) \\
    c(s, t) &= st(t + s)
\end{align*}
Each line in this family contains the points 
$P := (s : -t : t - s)$, $f(P)$, $f^2(P)$, so is indeed improper.
Suppose that $ax + by + cz = 0$ for some $(x:y:z) \in \PrePer(f)$.
Since
\[
    \PrePer(f) \subseteq \{(x : y : z) : \text{each of $x, y, z$ is 0 or a root of unity}\},
\]
it follows that if $abc \ne 0$ and if none of $a/b$, $b/c$, $c/a$ is a root of unity, 
then $xyz \ne 0$, so $|x| = |y| = |z| = 1$.
Assuming $a, b, c \in \bar{\Q}$, then $-ax = by + cz$ \textit{et cetera} imply
\[
    |a| \le |b| + |c| \quad\text{and}\quad |b| \le |c| + |a| \quad\text{and}\quad |c| \le |a| + |b|.
\]
For our parametrized family, $abc = s^2 t^2 (s - t)^2 (s + t)(s - 2t)(2s - t)$,
so picking $(s, t) = (1, -2)$ yields $(a, b, c) = (30, -12, 2)$. 
Since $\lvert 30 \rvert > \lvert {-12} \rvert + \lvert 2 \rvert$ and none of $30/12$, $12/2$, $30/2$ is a root of unity, $L := V(30x - 12y + 2z)$ is disjoint from $\PrePer(f)$.
Thus, $L$ contains no preperiodic subvariety.
\end{eg}

Our third Example, based on \cite[Example 13]{IRS}, exhibits maps that are post-critically dynamically improper but not post-critically finite.

\begin{eg} \label{eg:PCDI_not_PCF}
Let $f = (x_0^d + c x_1^d : x_1^d : \ldots : x_n^d)$ for $n, d \ge 2$. 
Then $C_f = V(x_0 \ldots x_n)$ is always dynamically improper, but only sometimes preperiodic.
Indeed, $V(x_i)$ is fixed by $f$ for $i > 0$, 
while $V(x_0)$ contains the fixed point $(0 : 0 : \ldots : 1)$. 
However,
\[f(V(x_0 - zx_1)) = V(x_0 - (z^d + c)x_1)\]
for all $z$.
Thus $C_f$ is preperiodic under $f$ if and only if $0$ is preperiodic under $z^d + c$.
\end{eg}

The last Example in this Section is a toy model of some of the ideas behind the proof of the Main Theorem.

\begin{eg} \label{eg:random_family}
Let $f = (ax^2 + \alpha yz : by^2 + \beta xz : cz^2 + \gamma xy)$.
Then $f$ is a morphism if and only if 
\[\Res f = abc(abc + \alpha\beta\gamma)^3 \ne 0.\]
Trivially, $\Fix(f) \supseteq \{(1:0:0), (0:1:0), (0:0:1)\}$.
The Jacobian of $f$ is
\[
    J_f = \det \begin{pmatrix}2ax & \alpha z & \alpha y \\ \beta z & 2by & \beta x \\ \gamma y & \gamma x & 2cz\end{pmatrix} = 2\big((4abc + \alpha \beta \gamma)xyz - (a\beta\gamma x^3 + \alpha b\gamma y^3 + \alpha\beta c z^3)\big).
\]
By computing the resultant of the gradient of $J_f$, 
we see that $C_f = V(J_f)$ is smooth (hence, by the Leibniz rule, irreducible) if and only if 
\begin{equation} \label{eq:toy_eg}
\Phi := \Res \grad J_f = 2^{12} 3^3 (\alpha\beta\gamma)^2 (8abc - \alpha\beta\gamma)^6 \Res f \ne 0.     
\end{equation}
Assume henceforth that \eqref{eq:toy_eg} holds.
Then $C_f$ is disjoint from $\{(1:0:0), (0:1:0), (0:0:1)\}$, and $C_f \cap \Fix(f) \ne \varnothing$ if and only if the system 
\begin{align*}
(4abc + \alpha \beta \gamma)xyz &= a\beta\gamma x^3 + \alpha b\gamma y^3 + \alpha\beta c z^3 \\
x (by^2 + \beta xz) &= y(ax^2 + \alpha yz)  \\
x (cz^2 + \gamma xy) &= z(ax^2 + \alpha yz)  \\
y (cz^2 + \gamma xy) &= z(by^2 + \beta xz) 
\end{align*}
of 4 homogeneous equations in 3 unknowns has a nonzero solution. 
Notably, every nonzero solution $(x, y, z)$ to the first 3 equations is necessarily a solution to the 4\textsuperscript{th}, as $xyz = 0$ implies $x = y = z = 0$. 
The resultant of the first 3 equations is
\[
2^9 b c \alpha^8 \beta^4 \gamma^4 \Res f \cdot \Psi(a,b,c,\alpha,\beta,\gamma)
\]
where $\Psi$ has degree 15.
It follows that if $f \in V(\Psi) \setminus V(\Phi)$ then $f$ is a morphism whose critical locus is \emph{a fortiori} irreducible and contains a fixed point. 
In particular, such an $f$ is post-critically dynamically improper.
We note that by \cite[Theorem 4.1]{BD}, which says no elliptic curve can be critical and invariant for a nonlinear endomorphism of the plane, such an $f$ is not post-critically periodic.
\end{eg}

\section{Main Theorem: Reduction step} \label{sec:reduction}

Let $\bar{\End}^n_d$ denote the projective space of $(n+1)$-tuples of degree-$d$ homogeneous forms in $n+1$ variables, whose elements $(f_0 : \ldots : f_n)$ correspond bijectively to rational maps $f : \P^n \dashrightarrow \P^n$ of degree at most $d$.
Let 
\[\End^n_d = \{f \in \bar{\End}^n_d : f \text{ is a morphism of degree }d\}.\]
As the complement of the resultant locus, $\End^n_d$ is open,
so by \eqref{eq:starsbars} we have
\begin{equation} \label{eq:dimEnd}
    \dim \End^n_d = \dim \bar{\End}^n_d = (n+1)\binom{n+d}{d} - 1.
\end{equation}
For any $f$ in $\End^n_d$, 
let $C_f = V(J_f)$ denote the critical locus of $f$, where 
\[
    J_f(x_0, \ldots, x_n) = \det\frac{\partial f_i}{\partial x_j} 
    = \left| \begin{matrix} 
    \displaystyle \frac{\partial f_0}{\partial x_0} & \cdots & \displaystyle \frac{\partial f_0}{\partial x_n} \\
    \vdots & \ddots & \vdots \\
    \displaystyle \frac{\partial f_n}{\partial x_0} & \cdots & \displaystyle \frac{\partial f_n}{\partial x_n} 
    \end{matrix} \right|
\]
is the Jacobian determinant.
Since $J_f$ has degree $m = (n+1)(d-1)$,
it naturally induces a morphism $\End^n_d \to \F^n_m$ sending $f \mapsto J_f$.
The preimage of $\H^n_m$ under this morphism is the open set
\[U = \{f \in \End^n_d : C_f \text{ is irreducible}\}.\]

Our Main Theorem is twofold; we now show how the first part yields the second.

\begin{thm} \label{thm:new1}
Let $n \ge 1$ and $d \ge 2$. 
Then the set of maps with a periodic critical point is Zariski dense in $\End^n_d$.
\end{thm}

\begin{thm} \label{thm:new2}
Let $n \ge 1$ and $d \ge d_0$ where $d_0 = 2$ if $n < 4$ and $d_0 = 3$ otherwise. Then the set of post-critically dynamically improper maps is Zariski dense in $\End^n_d$.
\end{thm}

\begin{proof}[Proof of Theorem \ref{thm:new2}]
For $n = 1$ this is \cite[Theorem 1.6]{DeMarco}, so suppose $n \ge 2$.
Let $S$ be the locus in question
and let $D$ be the set of degree-$d$ endomorphisms of $\P^n$ 
with a periodic critical point. 
By Theorem \ref{thm:new1}, $D$ is dense in $\End^n_d$.
By \cite[Theorems 14 and 15(a)]{IRS} (and Remark \ref{rmk:U} below), $U$ is non-empty.
Since $\End^n_d$ is irreducible, $D \cap U$ is dense. 
But $D \cap U \subseteq S$ by Proposition \ref{propn:coperiodic}.
\end{proof}

\begin{rmk} \label{rmk:U} 
The results of Ingram--Silverman--Ramadas \cite{IRS} establishing $U \ne \varnothing$ are valid for all $d \ge \min\{n, 3\}$.
We expect $U \ne \varnothing$ for all $n, d \ge 2$. 
To prove this, we just need one $f \in \bar{\End}^n_2$ 
with $C_f$ irreducible. 
While it's easy to cook up matrices with simple determinants, not every matrix of linear forms is a Jacobian: by Euler's homogeneous function theorem, $f$ (viewed as a column vector) must satisfy 
\[ 
    \sum_{i=0}^n x_i \frac{\partial f}{\partial x_i}
    = 
    2f.
\]
In particular, if $f$ is not a morphism, 
then $C_f$ contains $f$'s indeterminacy locus.
Nor can $f$ be too simple: 
if $f_i = a_{i0} x_0^2 + \ldots + a_{in} x_n^2$ for some matrix $A = (a_{ij})$, 
then $J_f = 2^{n+1} x_0 \ldots x_n \det A$ by multilinearity.
Something like 
\[f_i = x_i^2 - c_i x_{i+1} x_{i+2}     \qquad (i = 0, \ldots, n \text{ taken mod } n+1)\]
might work: $f$ is a morphism iff $c_0 \ldots c_n \ne 1$, 
and when $n = 3$ and $(c_0, c_1, \ldots, c_n) = (-1, 1, \ldots, 1)$ 
then $C_f$ is irreducible because it's smooth. (Alas, we cannot rely on smoothness forever; it appears that $C_f$ is singular for all $f$ as soon as $n \ge 5$.)
\end{rmk}

The remainder of the paper is devoted to the proof of Theorem \ref{thm:new1}, which has two major parts.
The fundamental objects of study will be the sets
\[
    Y_s = \{f \in \End^n_d : \text{$f$ has a critical point of period dividing $s$}\} \qquad (s \ge 1)
\]
whose union is the locus in question.

\begin{lem} \label{lem:1}
For each $s \ge 1$, the set $Y_s$ is a hypersurface.
\end{lem}

\begin{lem} \label{lem:2}
For each pair of distinct prime numbers $p$ and $q$ exceeding $n$, the sets $Y_p$ and $Y_q$ are distinct.
\end{lem}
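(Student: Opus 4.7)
The strategy is contradiction via an incidence--dimension count. Assume $Y_p = Y_q$. For any $f \in Y_p \cap Y_q$, the uniqueness of the period of a periodic point forces either $f \in Y_1$ or $f$ to carry two distinct critical periodic points---one of exact period $p$ and one of exact period $q$ (distinct because their exact periods differ and $p \ne q$). The second case is parametrized by the incidence variety
\[
W = \{(f, P, Q) \in \End^n_d \times \P^n \times \P^n : P \ne Q,\ J_f(P) = J_f(Q) = 0,\ f^p(P) = P,\ f^q(Q) = Q\}.
\]
The key dimension estimate I need is that for a dense open set of distinct pairs $(P, Q) \in \P^n \times \P^n$, the four defining conditions on $f$ are algebraically independent on $\End^n_d$, cutting out a fiber of codimension exactly $2n + 2$. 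Granting this, $\dim W \le \dim \End^n_d - 2$, so the projection $\pi(W) \subseteq \End^n_d$ has codimension at least $2$.

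Combining, $Y_p = Y_q$ forces $Y_p = Y_p \cap Y_q \subseteq Y_1 \cup \pi(W)$; since $Y_p$ is pure codimension $1$ by Lemma \ref{lem:1} and $\pi(W)$ has codimension at least $2$, every irreducible component of $Y_p$ must lie in $Y_1$, so $Y_p \subseteq Y_1$. Together with the trivial inclusion $Y_1 \subseteq Y_p$ this gives $Y_p = Y_1$. To derive a contradiction I would run a parallel argument with $q$ replaced by $1$. The locus $V_p^*$ of pairs $(f, P) \in \End^n_d \times \P^n$ with $J_f(P) = 0$, $f^p(P) = P$, and $P$ of exact period $p$ has dimension $\dim \End^n_d - 1$. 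Under the assumption $Y_p = Y_1$, every such $f$ additionally has a critical fixed point $Q$, necessarily distinct from $P$, so $(f, P, Q)$ belongs to the analogous incidence variety $W'$ (defined as $W$ but with $q = 1$). Hence the projection $W' \to V_p^*$ is surjective, giving $\dim W' \ge \dim \End^n_d - 1$, while the same fiber-codimension bound yields $\dim W' \le \dim \End^n_d - 2$, a contradiction.

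The main obstacle is the independence claim underlying $\dim W \le \dim \End^n_d - 2$ (and $\dim W' \le \dim \End^n_d - 2$). This should be verified by exhibiting one specific test triple $(f_0, P_0, Q_0)$ at which the Jacobian of the four defining equations, viewed as polynomials on $\End^n_d$ with $(P_0, Q_0)$ fixed, has full rank $2n + 2$; by semicontinuity, the same bound then holds on a Zariski-open set of $(P, Q)$. The hypothesis $p, q > n$ presumably enters here, giving enough room in $\P^n$ to place two disjoint periodic orbits of lengths $p$ and $q$ whose periodicity conditions interact non-degenerately with each other and with the critical vanishing conditions.
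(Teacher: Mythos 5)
Your approach is genuinely different from the paper's. The paper reduces to the $n=1$ case via the $n$\textsuperscript{th}-symmetric-power morphism $s_n : \End^1_d \to \End^n_d$: it constructs a bicritical map $f_p(z) = z^{-d}+c$ all of whose critical points have exact period $p$, then uses Proposition~\ref{propn:key} to show $s_n(f_p) \in Y_q$ forces $p = q$ when $p,q > n$. Your proposal instead runs a direct incidence--dimension count in $\End^n_d \times \P^n \times \P^n$. This is an appealing strategy, but as written it has gaps that are serious enough that I don't think it goes through.

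The most fundamental gap is the assertion that $V_p^*$ has dimension $N - 1$, where $N = \dim\End^n_d$. This presupposes that $V_p^*$ is nonempty, i.e.\ that there exists a degree-$d$ endomorphism of $\P^n$ with a critical point of \emph{exact} period $p$. If $V_p^*$ were empty, then $Y_p = Y_1$ would hold with no contradiction, and your argument would terminate without establishing the lemma. But nonemptiness of $V_p^*$ is precisely the kind of existence statement Lemma~\ref{lem:2} is trying to extract, and it is exactly what the paper's explicit construction $s_n(f_p)$ supplies. Lemma~\ref{lem:1} only shows that $Y_p$ has pure codimension~1; it says nothing about whether the exact-period-$p$ part of $\End^n_d(p) \cap V(J)$ is nonempty (a priori, every critical periodic point of period dividing $p$ could be fixed). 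As it stands, your argument silently assumes the conclusion.

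There are two further problems with the bound $\dim W \le N - 2$. First, the transversality claim --- that for generic $(P,Q)$ with $P \ne Q$ the $2n+2$ conditions $J_f(P)=J_f(Q)=0$, $f^p(P)=P$, $f^q(Q)=Q$ cut out a codimension-$(2n+2)$ locus in $\End^n_d$ --- is acknowledged as the ``main obstacle'' but never established, and the suggestion that $p,q>n$ ``presumably'' enters here is speculative. Second, and independently of that, even granting the generic-fiber estimate it does not follow that $\dim W \le N-2$: a component $Z$ of $W$ whose image $\overline{\pi(Z)}$ is a proper closed subvariety of $\P^n\times\P^n$ could have much larger fibers over that subvariety. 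The fibre dimension theorem bounds fiber dimension from \emph{below}, and checking a single generic pair $(P_0, Q_0)$ tells you nothing about fibers over the non-dominant loci. To control those you would need a genuinely different argument (e.g., unmixedness, or a projection to $\End^n_d$ with controlled fibers as in the paper's Proposition~\ref{propn:purity}), and it is not clear such an argument is available here.
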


Lemmas \ref{lem:1} and \ref{lem:2} are the topics of Sections \ref{sec:elim} and \ref{sec:symm}, respectively.
Taking them for granted, Theorem \ref{thm:new1} becomes immediate.

\begin{proof}[Proof of Theorem \ref{thm:new1}]
Trivially, 
\[
    \bigcup_{p > n} Y_p \subseteq \bigcup_{s \ge 1} Y_s = \{f \in \End^n_d : \text{$f$ has a periodic critical point}\}.
\]
By Lemmas \ref{lem:1} and \ref{lem:2}, 
\[\overline{\bigcup_{p > n} Y_p} = \End^n_d. \qedhere\]
\end{proof}

\begin{rmk} \label{rmk:stress}
We stress that neither Lemma \ref{lem:1} nor \ref{lem:2} in isolation can guarantee density of the union of the $Y_s$'s.
This is because in general, if $\{Y_i : i \in I\}$ is an infinite set of pure $r$-dimensional closed subsets of a Noetherian space $X$, then 
the dimension of $\overline{\bigcup_{i \in I} Y_i}$ is at least $r + 1$, but not necessarily any greater.
Only when $r = \dim X - 1$ can we 
conclude that the union is dense in $X$.
Purity is also essential, 
as the dimension of a topological space is merely the supremum of the dimensions of its irreducible components; 
if impure, the $Y_i$'s can differ in their low-dimensional parts.
\end{rmk}

\section{Elimination theory} \label{sec:elim}

Our proof of Lemma \ref{lem:1} uses an algebro-geometric generalization of the rank--nullity theorem known as the \emph{fibre dimension theorem} \cite[p.~49, Theorem 3]{REDBOOK}: 

\begin{propn}
If $\varphi : X \to Y$ 
is a dominant morphism of irreducible varieties, 
then every irreducible component of every fibre of $\varphi$ has dimension at least $\dim X - \dim Y$.
Moreover, equality holds over a non-empty open subset of $Y$.
\end{propn}

Let 
\[
    \End^n_d(s) = \{(f, P) \in {\End^n_d} \times \P^n : f^s(P) = P\}
\]
be the space of degree-$d$ endomorphisms of $\P^n$ with a marked point of period dividing $s$.
\emph{A priori} it is not clear that $\End^n_d(s)$ has the same dimension as $\End^n_d$, given that 
\[\End^n_d(s) = V(x_i f^s_j - x_j f^s_i : 0 \le i, j \le n)\]
is cut out by $\binom{n+1}{2} > n$ equations. (Here, $f^s_i$ denotes the $i$\textsuperscript{th} component of the $s$\textsuperscript{th} iterate of $f$.)

\begin{propn} \label{propn:purity}
For each $n \ge 1$, $d \ge 2$, and $s \ge 1$, the variety $\End^n_d(s)$ is pure of dimension $N := \dim {\End^n_d}$ (cf.~equation \eqref{eq:dimEnd} in ~\S\ref{sec:reduction}).
\end{propn}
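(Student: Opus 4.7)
The plan is to combine a lower bound via Krull's Hauptidealsatz with an upper bound via the diagonal $\PGL_{n+1}$-action on $\End^n_d \times \P^n$.

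For the lower bound, I would observe that near any $(f_0, P_0) \in \End^n_d(s)$, choosing affine coordinates on $\P^n$ around $P_0$ recasts the closed condition $f^s(P) = P$ as $n$ explicit local polynomial equations in $\End^n_d \times \P^n$ (which has dimension $N + n$). By Krull's Hauptidealsatz, every irreducible component $W$ of $\End^n_d(s)$ satisfies $\dim W \geq N$. For the matching upper bound, the diagonal action $\gamma \cdot (f, P) = (\gamma f \gamma^{-1}, \gamma P)$ of $\PGL_{n+1}$ preserves $\End^n_d(s)$; connectedness of $\PGL_{n+1}$ forces each $W$ to be $\PGL$-invariant, and $\PGL$-transitivity on $\P^n$ makes $\pi_2|_W \colon W \to \P^n$ surjective with all fibers isomorphic. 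Hence $\dim W = n + \dim F_P$, where $F_P := \{f \in \End^n_d : f^s(P) = P\}$, reducing the problem to showing $\dim F_P \leq N - n$.

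To bound $\dim F_P$, I would introduce the auxiliary orbit variety
\[
Z^P := \{(f, P_1, \ldots, P_{s-1}) \in \End^n_d \times (\P^n)^{s-1} : f(P_{i-1}) = P_i \text{ for } 1 \leq i \leq s-1,\ f(P_{s-1}) = P\}
\]
(with $P_0 := P$), which is isomorphic to $F_P$ via the first projection since $f$ determines each $P_i = f^i(P_0)$. I would then project $q \colon Z^P \to (\P^n)^{s-1}$ and stratify the base by the number $k$ of distinct values in the orbit $\{P_0, P_1, \ldots, P_{s-1}\}$. The stratum $S_k$ (non-empty iff $k \mid s$) has dimension $(k-1)n$, and a generic $q$-fiber over $S_k$ is the linear subvariety of $\End^n_d$ defined by the $k$ independent projective evaluation conditions $f(Q_m) = Q_{\beta(m)}$ (one per distinct orbit point $Q_m$, with $\beta$ the induced permutation), hence of dimension $N - kn$. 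Summing, $\dim q^{-1}(S_k) \leq (k-1)n + (N - kn) = N - n$ for each $k$, so $\dim F_P = \dim Z^P \leq N - n$ and $\dim W = N$ for every component $W$.

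The main obstacle is ensuring the stratum-wise bound $\dim q^{-1}(S_k) \leq N - n$ holds uniformly. When $k$ exceeds the interpolation threshold $\binom{n+d}{d}$, the $k$ evaluation conditions on $f$ cease to be linearly independent and generic $q$-fibers over $S_k$ grow larger than $N - kn$; however, in precisely this regime the image of $q$ within $S_k$ simultaneously shrinks to the realizability locus where the prescribed orbit-target tuple lies in the image of the degree-$d$ evaluation map---a sub-stratum of $S_k$ whose codimension compensates exactly for the excess fiber dimension---keeping the product bounded by $N - n$. Making this codimension count rigorous is the crux of the argument.
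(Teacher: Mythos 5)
Your lower bound (local $n$ equations near $(f_0,P_0)$ plus Krull's Hauptidealsatz) is exactly the argument in the paper, and it is correct. The upper bound is where you diverge, and it is where your proposal breaks down.

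The paper bounds $\dim Z$ from above by projecting to the \emph{first} factor: $\pi_1\colon \End^n_d(s)\to\End^n_d$ has finite fibres $\pi_1^{-1}(f)\cong\Fix(f^s)$ (a morphism of $\P^n$ has only finitely many points of period dividing $s$), so the fibre dimension theorem immediately gives $\dim Z\le\dim\End^n_d=N$. Two lines, no stratification. You instead project to the \emph{second} factor, exploit $\PGL_{n+1}$-equivariance to reduce to a single fibre $F_P$, and try to show $\dim F_P\le N-n$ by stratifying an auxiliary orbit variety by the number $k$ of distinct points in the $s$-cycle. That scheme founders precisely where you say it does: when $k$ is large, the $kn$ linear conditions $f(Q_m)=Q_{\beta(m)}$ become dependent and the generic $q$-fibre over $S_k$ exceeds $N-kn$; you assert that the realizability locus inside $S_k$ loses a compensating amount of dimension, but you give no mechanism to prove it, and this would require controlling the image of a nonlinear evaluation map on a stratum-by-stratum basis. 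You also write $\dim W = n+\dim F_P$; in fact $\pi_2|_W^{-1}(P)\subseteq F_P$ gives only $\dim W\le n+\dim F_P$, which is what you want but should be stated as an inequality. Even granting that, the proposal does not close the gap it identifies and so does not constitute a proof. Swapping to $\pi_1$ dissolves the entire difficulty: the crucial observation you are missing is that it is the $f$-direction, not the $P$-direction, over which the fibres of the incidence variety are trivially finite.
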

\begin{rmk}
    This follows from \cite[Lemma 2.2(1)]{Fakhruddin2014}, but we give a self-contained direct argument for completeness' sake.
    Incidentally, Proposition 3.3 \textit{loc.~cit.}~implies that each $\End^n_d[s]$ is irreducible (square brackets denoting exact period).
\end{rmk}
\begin{proof}
Let $Z$ be an irreducible component of $\End^n_d(s)$.
The canonical projection $\pi_1 : \End^n_d(s) \to \End^n_d$ has zero-dimensional fibres $\pi_1^{-1}(f) \cong \Fix(f^s)$, 
so the same is true of its restriction $Z \to \pi_1(Z)$.
By the fibre dimension theorem, 
\[\dim Z \le \dim \pi_1(Z) + \dim \pi_1^{-1}(f) \le {\dim \End^n_d} + 0 = N.\]
On the other hand, $Z$ is \emph{locally} at least $N$-dimensional. 
Indeed, considering the open cover 
\[U_i = V(x_i)^c = \{(f, P) \in {\End^n_d} \times \P^n : x_i(P) \ne 0\} \qquad (i = 0, \ldots, n)\] 
we must have $Z \cap U_i \ne \varnothing$ for some $i$, 
which makes $Z \cap U_i$ an irreducible component of $\End^n_d(s) \cap U_i$.
But 
\begin{equation} \label{eq:End_cap_Ui}
\End^n_d(s) \cap U_i = V(x_i f^s_j - x_j f^s_i : 0 \le j \le n, j \ne i) \cap U_i 
\end{equation}
because if $x_i(P) \ne 0$ and $(x_i f^s_j - x_j f^s_i)(P) = 0$ for all $j \ne i$, 
then 
\[x_i f^s_j x_k = x_j f^s_i x_k = x_j f^s_k x_i\]
at $P$, 
so that $(x_k f^s_j  - x_j f^s_k)(P) = 0$ for all $j, k$.
With only $n$ defining equations, the r.h.s.~of \eqref{eq:End_cap_Ui} is an open subset of a simpler variety---one whose every irreducible component has codimension at most $n$.
Thus, $\dim Z = \dim(Z \cap U_i) \ge N$.
\end{proof}

The Jacobian determinant (cf.~\S\ref{sec:reduction}) can be viewed as a bihomogeneous form $J$ in the coordinates of $x$ and the coefficients of $f$. Its zero-locus $V(J) \subseteq {\End^n_d} \times \P^n$ is precisely the space of degree-$d$ endomorphisms of $\P^n$ with a marked critical point. 

\begin{propn} \label{propn:J}
Let $n \ge 1$ and $d \ge 2$.
Then $V(J)$ is an irreducible hypersurface.
\end{propn}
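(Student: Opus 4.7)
The plan is to argue that $J$, viewed as a bihomogeneous polynomial in the coefficients of $f$ and in the coordinates $x_0, \ldots, x_n$, is itself irreducible; since $V(J) \subseteq \End^n_d \times \P^n$ is cut out by the single equation $J = 0$, and $J \not\equiv 0$ (for instance $J_f = d^{n+1}(x_0 \cdots x_n)^{d-1}$ when $f = (x_0^d : \ldots : x_n^d)$), this immediately reduces the full statement to a polynomial irreducibility claim.

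The key step will be a specialization at $P_0 = (1 : 0 : \ldots : 0)$. At $P_0$, the $(i,j)$ entry of the Jacobian matrix reduces to $d \cdot [x_0^d] f_i$ for $j = 0$, and to $[x_0^{d-1} x_j] f_i$ for $j \ge 1$, so the $n+1$ entries of row $i$ are $n+1$ linearly independent coordinates on the coefficient space $V \cong k^{\binom{n+d}{d}}$ of $f_i$. These package into a surjective linear map $\phi : V^{n+1} \to \mathrm{Mat}_{n+1}(k)$ with respect to which $J(f, P_0) = \det \circ \phi$. Since the generic $(n+1) \times (n+1)$ determinant is a classical irreducible polynomial, and the pullback of an irreducible polynomial along a surjective linear map is again irreducible (splitting $V^{n+1} = \mathrm{Mat}_{n+1}(k) \oplus \ker \phi$ presents $k[V^{n+1}]$ as a polynomial extension of the UFD $k[\mathrm{Mat}_{n+1}(k)]$, and irreducibility is preserved by Gauss's lemma), I will conclude that $J(f, P_0)$ is irreducible and in particular nonzero as a polynomial in $f$.

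From here I would rule out any putative bihomogeneous factorization $J = AB$ with $A, B$ non-scalar. If $A$ has $f$-degree zero, then $A \in k[x_0, \ldots, x_n]$ has positive $x$-degree, so $V(A) \subseteq \P^n$ is a nonempty hypersurface, and the divisibility $A \mid J(f, x)$ forces every point of $V(A)$ to be a critical point of every $f$. But the locus $\Lambda := \{P \in \P^n : J(f, P) \equiv 0 \text{ as a polynomial in } f\}$ is $\PGL_{n+1}$-invariant (since conjugation transports critical loci compatibly) and does not contain $P_0$ by the previous paragraph; transitivity of the $\PGL_{n+1}$-action on $\P^n$ then forces $\Lambda = \varnothing$, contradicting $V(A) \subseteq \Lambda$. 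The symmetric case (where $B$ has $f$-degree zero) is identical. If instead both $A$ and $B$ have positive $f$-degree, then specializing $x = P_0$ yields $J(f, P_0) = A(f, P_0) \cdot B(f, P_0)$, which is either a nontrivial factorization of the irreducible $J(f, P_0)$, or else involves a factor that specializes to zero and forces $J(f, P_0) = 0$; each option contradicts the previous paragraph.

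The main obstacle I anticipate is the irreducibility of the specialization $J(f, P_0)$. This rests both on the somewhat delicate bookkeeping that matches the $(n+1)^2$ Jacobian entries at $P_0$ with independent coefficients of the $f_i$'s, and on invoking the classical (but nontrivial) irreducibility of the generic determinant; the $\PGL_{n+1}$-transitivity argument used to eliminate an $x$-only factor is then secondary, but also essential.
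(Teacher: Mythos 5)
Your argument is correct, and it rests on the same two pillars as the paper's proof — the explicit identification of the Jacobian matrix at $P_0 = (1:0:\cdots:0)$ with a generic $(n+1)\times(n+1)$ matrix in distinct coordinates (hence a generic determinant), and the $\PGL_{n+1}$-equivariance $J_{\varphi f \varphi^{-1}}(\varphi(P)) = (\text{const}) \cdot J_f(P)$ — but the logical wrapper is genuinely different. The paper proves irreducibility of the \emph{variety} $V(J)$ via a fibration criterion: $V(J)$ is pure (single equation), it maps onto the irreducible base $\P^n$, and the $\PGL_{n+1}$-action shows all fibres are isomorphic to the single irreducible fibre over $P_0$; a classical criterion (pure total space over an irreducible base with irreducible equidimensional fibres is irreducible) closes the loop. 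You instead prove irreducibility of the \emph{bihomogeneous polynomial} $J$ directly by ruling out factorizations $J = AB$: if some factor has $f$-degree zero it is an $x$-only form, and the $\PGL$-transitivity argument shows the locus $\Lambda$ where $J(\cdot, P) \equiv 0$ is empty; if both factors have positive $f$-degree, specializing $x = P_0$ contradicts irreducibility of the generic determinant (here bihomogeneity in $f$ is what guarantees that a nonzero specialization $A(f, P_0)$ still has $f$-degree $\deg_f A \ge 1$, closing the "specializes to a nonzero constant" loophole you gesture at). Your route is somewhat more elementary — no appeal to the fibre-dimension criterion or to Chevalley — and actually delivers a slightly stronger conclusion (irreducibility of the defining polynomial rather than merely of its zero locus), at the cost of a small amount of extra bookkeeping about factor bidegrees.
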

\begin{proof}
As the vanishing locus of a single polynomial, $V(J)$ is closed and pure. 
To establish irreducibility, we use the following classical criterion:
If $X$ is pure, $Y$ is irreducible, and $\pi : X \to Y$ 
has irreducible fibres all of the same dimension, then $X$ is irreducible.\footnote{\textit{Proof (sketch).} Each fibre is wholly contained in some component of $X$. From this and Chevalley's theorem on constructible sets, $Y$ is dominated by a unique component of $X$. The fibre dimension theorem eliminates the possibility that $X$ has more than one component.}

The automorphism group $G = \PGL_{n+1}$ of $\P^n$ induces an action on ${\End^n_d} \times \P^n$ by 
\[
    \varphi \cdot (f, P) = (\varphi f \varphi^{-1}, \varphi(P))
\]
with respect to which the canonical projection $\pi_2 : {\End^n_d}\times \P^n \to \P^n$ is equivariant.
Since $G$ acts transitively on $\P^n$, the restriction of $\pi_2$ to any $G$-invariant subset of ${\End^n_d} \times \P^n$ is surjective with pairwise isomorphic fibres.
Now, $V(J)$ is $G$-invariant, because $J_{\varphi f \varphi^{-1}}(\varphi(P)) = \det \varphi \cdot J_f(P) \cdot \det \varphi^{-1}$ by the chain rule.\footnote{This identity holds literally only if the lift one chooses for $\varphi^{-1}$ is the inverse of the lift for $\varphi$.}
So, to prove the claim it suffices to show irreducibility of the fibre over the convenient point $P_0 = (1 : 0 : \ldots : 0)$.

To that end, we 
identify the fibre with a subset of
$\End^n_d$ and use coordinates $f_{i,\alpha}$ ($i = 0, \ldots, n$; $|\alpha| = \alpha_0 + \ldots + \alpha_n = d$). 
Let $e_i \in \Z^{n+1}$ denote the $i$\textsuperscript{th} standard unit vector.
The calculations 
\[
\frac{\partial f_i}{\partial x_0}(1, 0, \ldots, 0) 
= \sum_{|\alpha|=d} \alpha_0 f_{i,\alpha}  1^{\alpha_0-1} 0^{\alpha_1} \ldots 0^{\alpha_n} = d \cdot f_{i,(d, 0, \ldots, 0)} 
\]
and 
\[
\frac{\partial f_i}{\partial x_j}(1, 0, \ldots, 0) 
= \sum_{|\alpha|=d} \alpha_j f_{i,\alpha}  1^{\alpha_0} 0^{\alpha_1} \ldots 0^{\alpha_j-1} \ldots 0^{\alpha_n} = f_{i,(d-1,0,\ldots,1,\ldots,0)}
\]
for $j > 0$
imply 
$\pi_2|_{V(J)}^{-1}(P_0) \cong V(d \det(f_{i,(d-1)e_0+e_j}))$. 
Thus, the fibre is given by a generic determinant in $(n+1)^2$ indeterminates, so it's irreducible of dimension $N - 1$.
Thus $V(J)$ is irreducible, and 
\[
    \dim V(J) 
    = \dim \pi_2(V(J)) + \dim \pi_2|_{V(J)}^{-1}(P_0) 
    = n + N - 1. \qedhere
\]
\end{proof}

\begin{rmk}
The same argument can be used to 
show that $\End^n_d(1)$ is irreducible. In this case, the fibre $\pi_2|_{\End^n_d(1)}^{-1}(P_0) \cong V(f_{1,de_0}, \ldots, f_{n, de_0})$ is a linear subvariety.
\end{rmk}

\begin{proof}[Proof of Lemma \ref{lem:1}]
Unraveling definitions,
\[
    Y_s = \pi_1(\End^n_d(s) \cap V(J)).
\]
By the main theorem of elimination theory, 
the canonical projection $\pi_1 : {\End^n_d} \times \P^n \to \End^n_d$ is a closed map. 
Thus, $Y_s$ is closed.
Let $W$ be an irreducible component of $Y_s$. 
Then $W = \pi_1(Z)$ for some irreducible component $Z$ of $\End^n_d(s) \cap V(J)$.
The fibre dimension theorem gives $\dim W \ge \dim Z$,
and Propositions \ref{propn:purity} and \ref{propn:J} further entail
\[
    \codim Z \le \codim \End^n_d(s) + \codim V(J) = (N + n - N) + 1 = n + 1
\]
whence \textit{a fortiori} $\dim W \ge N + n - (n + 1) = N - 1$.
Since $W$ was arbitrary, 
it remains to prove $Y_s \ne {\End^n_d}$.

First consider the case $n = 1$.
Here, the bicritical map
\[
    f(z) = 1 + \frac{\zeta - 1}{z^d} \qquad (\zeta^d = 1, \ \zeta \ne 1)
\]
is totally ramified at $0$ and $\infty$, and sends 
\[
    0 \to \infty \to 1 \to \zeta \circlearrowleft
\]
so that neither of $f$'s critical points is periodic. 
Thus $f \not\in \bigcup_{s \ge 1} Y_s$, so $Y_s \ne \End^1_d$ for all $s$.
The general case follows from this and Proposition \ref{propn:key} below.
\end{proof}

\section{Symmetric powers} \label{sec:symm}

The Vieta map $\eta : (\P^1)^n \to \F^1_n$ (cf.~\S\ref{sec:players}) is the quotient map for the natural action of the symmetric group \cite[Corollary 2.6]{Maakestad}.
It follows that every rational map $f : \P^1 \to \P^1$ induces a morphism $F = s_n(f) : \F^1_n \to \F^1_n$ of the same degree, called its \emph{$n$\textsuperscript{th} symmetric power}, satisfying 
\begin{equation} \label{eq:Feta}
    F(\eta(P_1, \ldots, P_n)) = \eta(f(P_1), \ldots, f(P_n))
\end{equation}
for all $P_i \in \P^1$.
(Typically, one picks an isomorphism $\F^1_n \cong \P^n$.) Symmetric powers first appeared in the context of dynamics in the work of Ueda \cite[Section 4]{Ueda}.
A useful consequence of \eqref{eq:Feta} is that, with $H_P$ as in \S\ref{sec:players},
\begin{equation} \label{eq:FHP}
    F(H_P) = H_{f(P)}
\end{equation}
for all $P$ in $\P^1$,
which immediately implies $s_n : \End^1_d \to \End^n_d$ is injective.
We now quote a key result describing the critical locus of a symmetric power. 

\begin{propn} \label{propn:symm}
Let $F$ be the $n$\textsuperscript{th} symmetric power of the rational map $f$. 
Then the critical locus of $F$ is 
    \[
        C_F = E \, \cup \bigcup_{P \in C_f} H_P
    \]
where $E = \{\eta(P_1, \ldots, P_n) : f(P_i) = f(P_j) \text{ for some } P_i \ne P_j\}$. (Note: $E$ is not closed, but $C_F$ is.)
Moreover, $F(\bar{E}) = \Delta = F(\Delta)$, where $\Delta$ (for \emph{discriminant}) is the branch locus of $\eta$.
\end{propn}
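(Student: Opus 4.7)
The plan is to pull the Jacobian determinant of $F$ back along $\eta$, read off its factorization, and then push the critical locus forward through $\eta$. Differentiating \eqref{eq:Feta}, taking determinants on an affine chart, and using that the Jacobian of $\eta$ (viewed as the tuple of elementary symmetric polynomials of $P_1,\ldots,P_n$) is, up to sign, the Vandermonde $\prod_{i<j}(P_i-P_j)$, I expect to arrive at
\[
  J_F(\eta(Q))\cdot\prod_{i<j}(P_i-P_j) \;=\; \prod_{i<j}\bigl(f(P_i)-f(P_j)\bigr)\cdot\prod_{i=1}^{n}f'(P_i).
\]
Since $P_i-P_j$ divides $f(P_i)-f(P_j)=(P_i-P_j)\,h(P_i,P_j)$ factor by factor, cancelling Vandermondes yields a genuine polynomial identity
\[
  J_F\circ\eta \;=\; \prod_{i=1}^{n}f'(P_i)\cdot\prod_{i<j}h(P_i,P_j),
\]
with $h(x,x)=f'(x)$.

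Next I would identify $\eta^{-1}(C_F)$ as the union of $\{P_i\in C_f\}$ over all $i$ together with $V(h(P_i,P_j))$ over all $i<j$. Each $V(h(P_i,P_j))$ splits set-theoretically as $\{P_i\neq P_j:f(P_i)=f(P_j)\}\cup\{P_i=P_j\in C_f\}$. Pushing forward via $\eta$, which is finite and surjective hence closed, the first family gives $\bigcup_{P\in C_f}H_P$, and the $V(h)$ pieces contribute $E$ together with additional hypersurfaces already lying in $\bigcup_{P\in C_f}H_P$. Thus $C_F=E\cup\bigcup_{P\in C_f}H_P$.

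For the second assertion, the inclusions $F(\Delta)\subseteq\Delta$ and $F(E)\subseteq\Delta$ are immediate from \eqref{eq:Feta}: coincident coordinates in $Q$ are preserved by applying $f$ in each slot, and distinct coordinates with equal $f$-values become coincident. Since $\Delta$ is closed, $F(\bar E)\subseteq\Delta$ as well. For the reverse, given $\Phi\in\Delta$ with a double root at $Q_0$ and remaining roots $P_3,\ldots,P_n$, pick $R_k\in f^{-1}(P_k)$ arbitrarily. If $f^{-1}(Q_0)$ contains distinct points $Q_1\neq Q_2$, then $\eta(Q_1,Q_2,R_3,\ldots,R_n)\in E$ and $\eta(Q_1,Q_1,R_3,\ldots,R_n)\in\Delta$ both map to $\Phi$. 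If instead $Q_0$ is exceptional, so $f^{-1}(Q_0)=\{Q_0'\}$ with $Q_0'\in C_f$, then $(Q_0',Q_0',R_3,\ldots,R_n)\in V(h(P_1,P_2))$ because $h(Q_0',Q_0')=f'(Q_0')=0$; hence $\eta(Q_0',Q_0',R_3,\ldots,R_n)\in\bar E\cap\Delta$ and again maps to $\Phi$. This exhausts $\Delta$.

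The main technical point is converting the Vandermonde cancellation into a polynomial rather than merely rational identity; this reduces to the factorwise divisibility above, after which agreement off $\{J_\eta=0\}$ forces agreement everywhere. The other delicate moment is the exceptional-ramification case in the surjectivity argument, which is saved precisely by the observation that the $V(h)$-stratum of $\bar E$ already contains the collapsed diagonal tuples at critical points.
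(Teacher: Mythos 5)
Your proposal is correct and follows exactly the chain-rule strategy that the paper invokes and then outsources to \cite{GHK}, \cite{IRS}, and \cite{Ingram}; you have simply carried it out. The Jacobian identity you anticipate does hold (the Jacobian of the elementary symmetric functions is the Vandermonde, and differentiating \eqref{eq:Feta} and taking determinants gives $J_F(\eta(Q))\,J_\eta(Q)=J_\eta(f^{\times n}(Q))\prod_i f'(P_i)$), and your observation that the collapsed diagonal tuples at critical points belong to $\bar E$ is exactly what rescues surjectivity onto $\Delta$ at totally ramified points. One small bookkeeping note: since $f$ is rational, the factorization $f(x)-f(y)=(x-y)\,h(x,y)$ is cleanest if you lift $f$ to a pair of degree-$d$ binary forms and work with the bihomogeneous numerator $\tilde h$ of bidegree $(d-1,d-1)$; then $\tilde h(x,x)$ is (a scalar multiple of) the Wronskian $f_0'f_1-f_0f_1'$ cutting out $C_f$, which is the precise sense in which ``$h(x,x)=f'(x)$'' should be read. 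With that understanding, the Vandermonde cancellation yields a genuine bihomogeneous identity, and the rest of your pushforward and closure arguments go through as written.
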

\begin{proof}[Proof (sketch).]
One applies the chain rule to \eqref{eq:Feta}; see \cite[Lemma 2.1]{GHK} or \cite[Example 10]{IRS} for details.
See also \cite[Section 1]{Ingram} for a statement involving multiplicities. 
\end{proof}

\begin{eg}
$H_P$ is improper for $F$ if and only if $P \in \PrePer(f)$. 
Indeed, by \eqref{eq:FHP},
\[\bigcap_{k=0}^n F^{i_k}(H_P) = \bigcap_{k=0}^n H_{f^{i_k}(P)}\]
and this intersection is non-empty if and only if there exists $\Phi \in \F^1_n$ vanishing at each $f^{i_k}(P)$. 
But $\Phi$ has at most $n$ distinct roots, so the only way this can happen is if $f^{i_k}(P) = f^{i_l}(P)$ for some $k \ne l$, i.e.,~if $P$ is preperiodic.
\end{eg}

Applying this Example to the critical locus of $F$, we obtain the following partial generalization of \cite[Theorem 3]{GHK}.

\begin{cor} 
Post-critical improperness, post-critical dynamical improperness, and post-critical finiteness of $F$ are all equivalent to post-critical finiteness of $f$.
\end{cor}

Returning to the main thread, 
we proceed to describe the intersection of $Y_s$ with the locus of symmetric power maps.

\begin{propn} \label{propn:key}
Let $s_n : \End^1_d \to \End^n_d$ be the morphism sending $f$ to its $n$\textsuperscript{th} symmetric power.
Then 
\[
    s_n^{-1}(Y_s) = \! \bigcup_{\substack{t \mid ms \\ 1 \le m \le n}} \! Y_t
\]
for all $s \ge 1$.
(Note: the $Y_s$ on the left is a subset of $\End^n_d$ while the $Y_t$'s on the right are subsets of $\End^1_d$.)
\end{propn}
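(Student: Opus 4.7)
The plan is to combine Proposition \ref{propn:symm} with the defining identity $F(\eta(P_1, \ldots, P_n)) = \eta(f(P_1), \ldots, f(P_n))$ of the symmetric power, which (together with the fact that $\eta$ is the quotient map for the $S_n$-action) converts the equation $F^s(Q) = Q$ into the combinatorial assertion that some $\sigma \in S_n$ satisfies $f^s(P_i) = P_{\sigma(i)}$ for all $i$, where $Q = \eta(P_1, \ldots, P_n)$. From there the two inclusions will follow by analyzing the cycles of $\sigma$ together with the decomposition $C_F = E \cup \bigcup_{P \in C_f} H_P$.

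For the forward inclusion ($\subseteq$), given $Q \in C_F$ with $F^s(Q) = Q$, I would first rule out $Q \in E$ by a counting argument. Let $S = \{P_1, \ldots, P_n\}$ viewed as a set; $Q \in E$ means $f$ fails to be injective on $S$, so $|f(S)| < |S|$, and since the cardinality of $f^k(S)$ is non-increasing in $k$, we get $|f^s(S)| < |S|$, contradicting the fact that $F^s(Q) = Q$ forces $f^s(S) = S$ at the level of underlying sets. The remaining possibility is $Q \in H_P$ for some $P \in C_f$, so $P = P_i$ for some $i$. The $\sigma$-cycle through $i$ then has some length $m \le n$ and yields $f^{ms}(P) = P$, whence $f \in Y_{ms}$, which is one of the right-hand terms (take $t = ms$).

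For the reverse inclusion ($\supseteq$), given $f \in Y_t$ with $t \mid ms$, $1 \le m \le n$, I would produce an explicit periodic critical point of $F$. Let $P \in C_f$ satisfy $f^t(P) = P$; since $t \mid ms$ we also have $f^{ms}(P) = P$, so the $f^s$-period $m'$ of $P$ divides $m$, in particular $m' \le n$. Let $R$ be any fixed point of $f^s$ (which exists since $f^s$ is a nonconstant self-morphism of $\P^1$), and put
\[
    Q = \eta\bigl(P, f^s(P), \ldots, f^{(m'-1)s}(P), \underbrace{R, \ldots, R}_{n - m'}\bigr).
\]
Then $Q \in H_P \subseteq C_F$ and a direct computation shows $F^s(Q) = Q$, giving $F \in Y_s$, i.e., $f \in s_n^{-1}(Y_s)$.

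The main obstacle is the forward direction. \emph{A priori} a critical point of $F$ could lie in $\bar{E} \setminus E$, which is not immediately addressed by Proposition \ref{propn:symm}; the resolution is that closedness of $C_F$ forces $\bar{E} \setminus E \subseteq \bigcup_{P \in C_f} H_P$, so it is enough to rule out $Q \in E$, which the set-cardinality argument handles. Once this combinatorial step is in place, the rest is a transparent translation of the cycle structure of $\sigma$ through the Vieta map.
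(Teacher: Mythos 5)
Your proposal is correct and follows essentially the same route as the paper: for the forward inclusion, rule out $Q\in E$, then use Proposition~\ref{propn:symm} to place $Q$ on some $H_P$ with $P\in C_f$, and bound the relevant cycle length by $n$; for the reverse inclusion, build an explicit fixed configuration by padding a periodic critical orbit segment with a fixed point. The only differences are cosmetic: where you rule out $Q\in E$ by comparing cardinalities of $f^k(S)$, the paper observes that $f^s$ permuting $\{P_1,\dots,P_n\}$ forces each $P_i$ to be periodic, and $f$ is injective on $\Per(f)$, giving the same conclusion; and in the reverse direction the paper pads with a fixed point of $f$ rather than of $f^s$, and uses the given $m$ rather than the exact $f^s$-period $m'$, which avoids having to introduce $m'$ but is otherwise equivalent. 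Your worry about $\bar E\setminus E$ is already handled by the statement of Proposition~\ref{propn:symm}, which asserts the equality $C_F=E\cup\bigcup_{P\in C_f}H_P$ outright, so no extra closure argument is needed.
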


\begin{proof}
Let $f \in s_n^{-1}(Y_s)$. 
Then $F := s_n(f)$ has a critical point $\Phi = \eta(P_1, \ldots, P_n)$ of period $s$.
By \eqref{eq:Feta}, 
$f^s$ permutes the set $\{P_1, \ldots, P_n\}$;
in particular, each $P_i$ is periodic.
Since $f$ is injective on $\Per(f)$, $\Phi \not \in E$, so Proposition \ref{propn:symm} implies $\Phi \in H_P$ for some $P := P_1 \in C_f$, say.
Thus $f \in Y_t$ where $t$ 
is the period of $P$ under $f$.
Letting $m$ 
be the period of $P$ under $f^s$,
we have $t \mid ms$ because $f^{ms}(P) = (f^s)^m(P) = P$, 
and we have $1 \le m \le n$ because $\Orb_{f^s}(P) \subseteq \{P_1, \ldots, P_n\}$.

Conversely, suppose $f \in Y_t$ for some $t \mid ms$ with $1 \le m \le n$. 
Let $P \in C_f \cap \Fix(f^t)$.
Picking $Q \in \Fix(f)$ and setting $F = s_n(f)$ it is easy to check that
\[
\Phi = \eta(P, f^s(P), \ldots, f^{(m-1)s}(P), \underbrace{Q, \ldots, Q}_{n-m}) \in C_F \cap \Fix(F^s). \qedhere
\]
\end{proof}

\begin{rmk}
Using M\"obius inversion, 
one can show that the set of integers $t$ dividing 
$ms$ for some $1 \le m \le n$ 
has cardinality $\chi(s) n + \xi(n)$, where $\chi$ is a multiplicative function satisfying $\chi(p^k) = 1 + k(1-1/p)$, and $\xi$ has period $\operatorname{rad}(s)$.
\end{rmk}

\begin{proof}[Proof of Lemma \ref{lem:2}]
The argument hinges on the existence of a separating sequence of maps. 
Namely, we claim that for each $d \ge 2$ and each prime number $p$, 
there exists a parameter $c$ such that every critical point of the rational map 
\[f(z) = z^{-d} + c\]
has exact period $p$.
Indeed, 
$f$ is bicritical, being totally ramified at $0$ and $\infty$, and sends
\[0 \to \infty \to c\]
so we must solve the equation $f^p(0) = 0$ for $c$.
It can be shown (e.g.,~by induction) that for $s \ge 2$, the numerator of $f^s(0)$ has degree $1 + d + d^2 + \ldots + d^{s-2} \ge 1$ as a polynomial in $c$.
Taking $c$ to be any root (for $s = p$) proves the claim.

Performing this construction for each prime number $p$ yields a sequence of rational maps $f_p$ in $\End^1_d$ each of whose critical points all have exact period $p$.
Now suppose $s_n(f_p) \in Y_q$ for some primes $p, q > n$.
By Proposition \ref{propn:key}, 
$f_p \in Y_t$ for some $t \mid mq$ with $1 \le m \le n$.
The definition of $f_p$ implies $p \mid t$, 
which means $p \mid m$ or $p \mid q$. 
But $m$ is too small, so $p = q$.
In other words, $Y_p$ is the only set among the $Y_q$'s that contains $s_n(f_p)$. 
Thus $Y_p \ne Y_q$ for all primes $p \ne q > n$.
\end{proof}

\printbibliography

\end{document}